\newcommand{\Z}{\mathbb Z}
\newcommand{\g}{\mathfrak{g}}
\newtheorem{theorem}{Theorem}[section]
\newtheorem{lemma}[theorem]{Lemma}
\newtheorem{proposition}[theorem]{Proposition}
\newenvironment{definition}[1][Definition.]{\begin{trivlist}
\item[\hskip \labelsep {\bfseries #1}]}{\end{trivlist}}
\newenvironment{example}[1][Example]{\begin{trivlist}
\item[\hskip \labelsep {\bfseries #1}]}{\end{trivlist}}
\begin{document}

\def\C{{\mathbb C}}
\def\N{{\mathbb N}}
\def\Z{{\mathbb Z}}
\def\R{{\mathbb R}}
\def\PP{\cal P}
\def\p{\rho}
\def\phi{\varphi}
\def\ee{\epsilon}
\def\ll{\lambda}
\def\l{\lambda}
\def\a{\alpha}
\def\b{\beta}
\def\D{\Delta}
\def\g{\gamma}
\def\rk{\text{\rm rk}\,}
\def\dim{\text{\rm dim}\,}
\def\ker{\text{\rm ker}\,}
\def\square{\vrule height6pt width6pt depth 0pt}
\def\epsilon{\varepsilon}
\def\phi{\varphi}
\def\kappa{\varkappa}
\def\strl#1{\mathop{\hbox{$\,\leftarrow\,$}}\limits^{#1}}

\vskip1cm

\title{On Koszulity for operads of Conformal Field Theory}



\vskip1cm

\author{
 Natalia Iyudu,  Abdenacer Makhlouf}

\date{}

\maketitle







{\centerline{\bf Abstract}}

\bigskip

{\small

We study  two closely related operads: the Gelfand-Dorfman
operad ${\cal G}D$ and the
Conformal  Lie Operad ${\cal CL}ie$. The latter is the operad governing the Lie conformal  algebra structure.
We prove Koszulity of the Conformal Lie operad  using the Gr\"obner bases theory for operads and
an operadic analogue of the Priddy criterion.
An example of deformation of an operad  coming from the Hom structures is considered.
In particular we study possible deformations of the Associative operad from the point of view of the confluence property. Only one deformation, the operad which governs the identity $(\a(ab))c=a(\a (bc))$
turns out to be confluent.
We introduce a new Hom structure, namely Hom--Gelfand-Dorfman algebras and study their basic properties.

}
\bigskip

\normalsize

\section{Introduction}
\vskip11mm

One of  the key notions of the two-dimensional Conformal Field Theory
is the  operator product expansion (OPE) of chiral quantum fields
(see e.g. \cite{Kac}
):
$$
\Phi(x)\Psi(y)= \sum_{n\in \Z}\frac{(\Phi_{(n)}\Psi)(y)}
{(x-y)^{n+1}}.
$$
This defines a binary operation
$\Phi_{(n)}\Psi$
on the space of chiral fields for each
$n\in \Z$. These binary operations, along with the identity field and
the unary operation $\partial=\partial/\partial y$ form a structure,
called a vertex
(or chiral) algebra, which has been axiomatized by Borcherds \cite{Bor}.
The most important part of the OPE is its singular part since it allows to
compute commutators of chiral fields. This involves only the binary operations
$\Phi_{(n)}\Psi$ for each $n\in \Z_+$, and along with the unary operation
$\partial$, forms a structure, called a Lie conformal algebra, which has been
axiomatized by Kac \cite{Kac}.

{\bf Definition.} A {\it Lie conformal algebra} is a $\mathbb{C}[\partial]$ -
module $R$ endowed with binary bilinear over $\mathbb{C}$ operations
$\,\,\,{(n)} \,\,: R \times R \to R$ for each $n\in \Z_+$,
which satisfy the following axioms:
$$
(c1) \,\,  a_{(n)} b = 0, \,\, n \gg0,
$$
$$
(c2) \,\, (\partial a)_{(n)} b = -n a_{(n-1)} b, \,\,  a_{(n)}\partial b =
\partial(a_{(n)} b )+ n a_{(n-1)} b,
$$
$$
(c3) \,\, a_{(n)} b = - \sum\limits_{j=0}^{\infty} (-1)^{(n+j)} \partial^{(j)}
(b_{(n+j)} a),
$$
$$(c4) \,\, a_{(m)} (b_{(n)}c) - b_{(n)} (a_{(m)}c) =
\sum\limits_{j=0}^{m}\left({m \atop j}\right) (a_{(j)} b)_{(m+n-j)} c.
$$
In the axiom (c3) and further on we use the divided power notation:
$\partial^{(n)}=\partial^n/n!$.

Lie conformal algebras occur naturally in Classical Field Theory as well.
Namely, consider a linear local Poisson bracket
$$
\{u_i(x),u_j(y)\}= H_{ij}(u(y),u'(y),\cdots,u^{(s)}(y);\partial/\partial y)\delta(x-y),
$$
where $H=(H_{ij}(\partial/\partial y))$ is a matrix differential operator
of finite order,
linear in the $u_i$ and their derivatives, and let
$H_{ij}(\lambda)=\sum\limits_{s\in \Z} H_{ij}^s \frac{\lambda^s}{s!}$ be its symbol.
Then the $\C[\partial]$ - module, consisting of linear combinations of functions $u_i$
and their derivatives, is a Lie conformal algebra, endowed with
the products ${u_i}_{(s)}u_j =H_{ji}^s,\,s\in \Z_+$.

One of  the simplest examples of a linear local Poisson bracket, when $H$
has order one, was studied by Gelfand and Dorfman \cite{GD}; it is defined by the following symbol:
$$
H_{ij}(\lambda)=\sum_s c_{ij}^s u_s +
\sum_s d_{ij}^s u'_s +\lambda \left(\sum_s (d_{ij}^s + d_{ji}^s) u_s \right),
$$
where
$c_{ij}^s$ and $d_{ij}^s$
are constants, satisfying certain quadratic
relations. In order to explain these relations, consider the $\C$-span
$V$ of the $u_i$, and denote by
$[,]$ and $\circ$ the bilinear products,
with the structure constants $c_{ij}^s$ and $d_{ij}^s$ respectively.
Then
$(H_{ij}(\lambda))$ defines a local Poisson bracket if and only if
$[,]$ is a Lie algebra bracket, $\circ$ is a left symmetric algebra,
such that the operators of right multiplications commute,
and these two binary operations satisfy the compatibility condition (5) below.
Such a structure is called a Gelfand-Dorfman algebra. The special case,
when the bracket is zero, is called a Novikov algebra \cite{Nov}, \cite{Z}.

We consider in this paper  operads, which describe the structure of the
Gelfand-Dorfman algebras,  and of the Lie conformal algebras.
We denote  by ${\cal CL}ie$ the operad, which controls the structure of
Lie conformal
algebras, and we call it the ${\cal CL}ie$ (conformal Lie) operad.
In Theorem \ref{CLKosz} (Section \ref{CL}) we prove that  ${\cal CL}ie$ is Koszul, using the Gr\"obner bases theory  (Diamond Lemma) for operads and an operadic analogue of the Priddy criterion.
An explanation of these tools will be given in Section \ref{Gr}.

The complete list of relations for operations  $\circ$ and $[,]$,
defining a left(right) Gelfand-Dorfman algebra is the following:

\vspace{5mm}

$(1) \,\, (a \circ b)\circ c-a \circ (b\circ c)=(b\circ a)\circ c-b\circ(a\circ c) \,\,$ (left-symmetric),

\vspace{2mm}

$(2) \,\, (a\circ b)\circ c=(a\circ c)\circ b \,\,$  (right Novikov),

\vspace{2mm}

$(3) \,\, [a,b]=-[b,a] \,\,$,

\vspace{2mm}

$(4) \,\, [[a,b],c]+[[c,a],b]+[[b,c],a]=0 \,\,$,

\vspace{2mm}

$(5) \,\, [c \circ a,b]-[c\circ b,a]+[c,a]\circ b-[c,b]\circ a-c\circ [a,b]=0 \,\,$.

\vspace{5mm}

We call an algebra satisfying these relations a {\it right} GD-algebra. The algebra with opposite multiplication to  $\circ$, will be called a {\it left} GD-algebra.

%
%

In Section \ref{Def} we touch upon the question of the deformation theory for operads.
Well-known classes of algebras, such as Associative, Lie, Jordan, Novikov, Nambu, Poisson, Gerstenhaber, Batalin-Vilkovisky, etc. are governed by corresponding operads, they consist of algebras over those operads.

We study several examples of deformation of operads. We emphasize that algebras over those deformed operads are Hom--algebras, defined in the extensive literature over the last years \cite{LS1, MS, HC, Yau,YauP}.
This notion should be distinguished from the notion of a deformation of an algebra over a given operad,
developed and heavily used before in varying generality \cite{K, B, Al}.
We use here a naive version of a deformation of an operad, in order to provide examples which could lead to the general theory. Namely, by a deformation of an operad we mean here the following.

Consider an operad $\cal A$, and introduce an extra generator $h$ in the presentation of this operad by generators and relations. An extra operator corresponds in the algebra representation to a unary operation $\alpha: A \to A$. Then we write down relations, involving $h$, of the deformed operad ${\cal A}_h$ in  a certain way, such that if $h=id$, we get the old relations. This new operad ${\cal A}_h$ can be considered as a deformation of $\cal A$.

The examples of deformations we consider here govern exactly the notion of Hom--structures. For example, for operad ${\cal A}ss$, defined by one generation operation $m$ and one relation:

\begin{center}
\setlength{\unitlength}{1.0mm}
\begin{picture}
(25,15)(5,5)
\drawline(15,0)(15,5)
\drawline(15,5)(20,10)
\drawline(15,5)(10,10)
\drawline(20,10)(25,15)
\drawline(10,10)(5,15)
\drawline(10,10)(15,15)
\put(15,5){\circle*{1.7}}
\put(10,10){\circle*{1.7}}
\end{picture}
-
\begin{picture}
(25,15)(5,5)
\drawline(15,0)(15,5)
\drawline(15,5)(20,10)
\drawline(15,5)(10,10)
\drawline(20,10)(25,15)
\drawline(20,10)(15,15)
\drawline(10,10)(5,15)
\put(15,5){\circle*{1.7}}
\put(20,10){\circle*{1.7}}
\end{picture},
\end{center}
\vskip5mm
consider a deformed operad ${\cal A}ss_h$, defined by two generators, a binary operation $m$, and  a unary operation $h$, and a relation

 \begin{center}
\setlength{\unitlength}{1.0mm}
\begin{picture}
(25,15)(5,5)
\drawline(15,0)(15,5)
\drawline(15,5)(20,10)
\drawline(15,5)(10,10)
\drawline(20,10)(25,15)
\drawline(10,10)(5,15)
\drawline(10,10)(15,15)
\put(15,5){\circle*{1.7}}
\put(20,10){\circle{1.7}}
\put(10,10){\circle*{1.7}}
\end{picture}
-
\begin{picture}
(25,15)(5,5)
\drawline(15,0)(15,5)
\drawline(15,5)(20,10)
\drawline(15,5)(10,10)
\drawline(20,10)(25,15)
\drawline(20,10)(15,15)
\drawline(10,10)(5,15)
\put(15,5){\circle*{1.7}}
\put(20,10){\circle*{1.7}}
\put(10,10){\circle{1.7}}
\end{picture},
\end{center}

 \vspace{5 mm}



 The operation $h$ denoted here by an empty dot and $m$ by a filled one. An algebra over the operad
${\cal A}ss_h$ is a Hom-associative algebra defined in \cite{MS}.

We therefore study here examples of deformations of operads inspired by Hom--structures.

We consider all operads which can be obtained from ${\cal A}ss$ by deformations, of this kind, introducing an extra generator --- a unary operation. We classify them from the point of view of the confluence property of the deformed operad. Application of the version of the Gr\"obner bases theory for  nonsymmetric operads (which allows to deal with operations of arbitrary arity),  leads to the result, that only
one of those deformations is an operad with {\it confluent} presentation. This is the operad which governs the identity $(\alpha(ab))c=a(\alpha(bc)) $.
In particular, this operad is Koszul.

In  Section \ref{HGD} we introduce the notion of a Hom--Gelfand-Dorfman (Hom--GD) algebra. Namely we suggest to study a certain particular deformation of Gelfand-Dorfman operad.
As it was explained above the Gelfand-Dorfman structure is an enriched version of the Novikov structure, corresponding to more general type of local Poisson bracket. The latter have been extensively studied  (see for example \cite{Z},  \cite{Burde} and references therein), including the Hom versions \cite{Yau}.
We establish a number of basic properties of the Hom--GD algebra, the algebra over a certain deformation of the operad  {\cal GD}.


\section{Gr\"obner bases theory for operads}\label{Gr}

In this section we remind the definition of the (symmetric, shuffle, nonsymmetric) operad and set up the basics of the Gr\"obner bases theory for operads, analogous to the Gr\"obner bases theory for algebras presented by generators and relations. For more detailed introduction to various aspects of operads, including those discussed here see \cite{Bruno} and \cite{MM}.

The main ingredient for the development of the analogue of the Gr\"obner bases theory for  algebras in the operadic setting is the notion of shuffles and their orderings, which appeared first in this context in the paper due to Hoffbeck \cite{Hof}.
After introducing shuffles one can deal with elements of the free symmetric operad (trees with marked leaves and vertices) exactly in the same manner, as one can do with monomials in the free associative algebra.
This was written down precisely in the paper due to Dotsenko and Khoroshkin \cite{Dots}.

To explain main points of the Gr\"obner bases theory for operads presented by generators and relations,
where elements of the free operad are depicted  by tree-monomials, we will need to clarify how to order tree-monomials, and what means that one tree-monomial divides another. On the basis of this the whole
Gr\"obner bases theory works in a very much parallel way, as in the case of algebras.

Let us start with the definition of an operad.
The notion of an operad can be made very general, depending on the category, where we prefer to work.
The key point is to define the 'product', which makes a category of 'collections' into a monoidal category.

The first basic notion is a collection.
\begin{definition}  We define a {\it nonsymmetric collection}
as a functor from the category of non-empty finite ordered sets {\it Ord} (with order-preserving bijections as morphisms) to the category of vector spaces {\it Vect}.
\end{definition}

In this case a nonsymmetric collection is just the same as a graded vector space.

However to get a more general definition one can substitute the category of vector spaces by graded vector spaces, or dg-vector spaces, i.e. chain complexes, which is done for objects appearing in, say, algebraic topology.

\begin{definition}  We define a {\it symmetric collection}
as a functor from the category of non-empty finite sets {\it Fin} (with  bijections as morphisms) to the category of vector spaces {\it Vect}.
\end{definition}

In this case a symmetric collection is also called a 'S-module', which means it is a set of right $k[S_n]$-modules $M(n)$ for any $ n \in {\mathbb Z}_+$.

Note, that it can be shown that all vector spaces ${\cal P}(I)$, for $I \in $ {\it Fin} are defined by ${\cal P}(n)$=${\cal P}(\{\bar{1,n}\})$, so that $ {\cal P}(n)$, $n \in {\mathbb Z}_+$ is usually what is meant by a collection.

Now we can define a natural compositions of these collections.

\begin{definition} Let ${\cal P}$ and ${\cal Q }$ be two nonsymmetric collections. Their {\it nonsymmetric composition} is defined by the formula.

$$
{\cal P} \circ {\cal Q} (I) = \bigoplus_k {\cal P}(k) \otimes \,\,( \bigoplus_{f:I \to \{\bar{1,k}\}}
{\cal Q} (f^{-1}(1)) \otimes \cdots \otimes {\cal Q} (f^{-1}(k)) \,\,)
$$

Here the sum is taken over all monotonically non-decreasing surjections $f$.

\end{definition}

\begin{definition} Let ${\cal P}$ and ${\cal Q }$ be two nonsymmetric collections. Their {\it shuffle composition} is defined by the formula.

$$
{\cal P} \circ_{sh} {\cal Q} (I) = \bigoplus_k {\cal P}(k) \otimes \,\, ( \bigoplus_{f:I \to \{\bar{1,k}\}}
{\cal Q} (f^{-1}(1)) \otimes \cdots \otimes {\cal Q} (f^{-1}(k))\,\,)
$$

Here the sum is taken over all shuffling surjections $f$, i.e. those for which $f^{-1}(i) < f^{-1}(j)$, if $i<j$.

\end{definition}

\begin{definition} Let ${\cal P}$ and ${\cal Q }$ be two symmetric collections. Their {\it symmetric composition} is defined by the formula

$$
{\cal P} \circ {\cal Q} (I) = \bigoplus_k {\cal P}(k) \otimes_{k[S_k]} \,\, ( \bigoplus_{f:I \to \{\bar{1,k}\}}
{\cal Q} (f^{-1}(1)) \otimes \cdots \otimes {\cal Q} (f^{-1}(k))\,\,)
$$

Here the sum is taken over all  surjections $f$.

\end{definition}

It can be proven that defined above composition (nonsymmetric, shuffle, symmetric) makes the set of corresponding (nonsymmetric or symmetric) collections into a monoidal category, i.e.
 the pentagon and triangular diagrams are satisfied, and the proper unital map is present.

Now any monoid in the corresponding monoidal category we call a {\it (nonsymmetric, shuffle, symmetric) operad.}

\begin{example}
Let $V$ be a vector space, denote by $\rm{End}_V$ the {\it operad  of multilinear mappings}. It is a collection $Hom(V^{\otimes n}, V), \, n \ge 0$ of all multilinear mappings, with the composition maps, defined in a natural way.
\end{example}

\begin{definition} An algebra over an operad ${\cal P}$ is a vector space $V$ together with an operadic morphism from ${\cal P}$ to a corresponding operad of multilinear mappings ${\rm End}_V$.
\end{definition}

The 'building blocks' for the compositions we just defined (which serves as a monoidal product) are the following {\it elementary compositions}.
We will need them later for the Gr\"obner bases theory for symmetric operads. The case of symmetric operads is what we mainly will need for our results, so we deal with this version of the Gr\"obner bases theory.  The nonsymmetric case is easier, and could be obtained as a particular case of the symmetric one.

\begin{definition} We say that an element $a$ of an operad ${\cal P}$ has an {\it arity} $n$, if it belongs to
${\cal P}(n+1)$.
\end{definition}

\begin{definition}
Let ${\cal P}$ be a symmetric operad, for elements $a \in {\cal P}(n)$ and  $b \in {\cal P}(m)$
we define

1. a {\it nonsymmetric elementary composition} $a \circ_i b$ as the operation

$$ a(x_1,\cdots,x_{i-1}, b(x_i,x_{i+1},\cdots,x_{i+m-1}),x_{i+m},\cdots,x_{m+n-1})$$

2. a {\it shuffle elementary composition} $a \circ_{i,\sigma} b$ as the operation

$$ a(x_1,\cdots,x_{i-1}, b(x_i,x_{\sigma(i+1)},\cdots,x_{\sigma(i+m-1)}),x_{\sigma(i+m)},\cdots,x_{\sigma(m+n-1)})$$

where the bijection $\sigma: \{i+1,\cdots,m+n-1\} \to \{i+1,\cdots,m+n-1\}$ is a $(m-1,n-i)$-shuffle,
which means:

$$\sigma(i+1)<\cdots<\sigma(m+n-1)$$

and

$$\sigma(i+1)<\cdots<\sigma(m+n-1).$$

3. a {\it symmetric elementary composition} $a \circ_{i,\sigma} b$ as the operation

$$ a(x_1,\cdots,x_{i-1}, b(x_i,x_{\sigma(i+1)},\cdots,x_{\sigma(i+m-1)}),x_{\sigma(i+m)},\cdots,x_{\sigma(m+n-1)})$$

where $\sigma $ ia an arbitrary permutation from $S_{m+n-1}$.

\end{definition}

Consideration of a shuffle operad instead of a symmetric operad does not lead to any loss of information,
it just corresponds to a certain choice of representatives. If we present elements of the free symmetric operad  as  decorated trees in the space, then to take a shuffle representative of that tree, means to put it into the plane in a certain, canonical way (as it is usually done with trees). The resulting plane tree will be an element of the free shuffle operad.

 In a more rigorous  way, it could be formulated as follows: for two symmetric collections $\cal P $ and $\cal Q$, for a forgetful functor from symmetric to shuffle categories of collections, the following holds:

$$({\cal P} \circ {\cal Q})^F = {\cal P}^F \circ_{sh} {\cal Q}^F,$$
that is, the forgetful functor is monoidal.

So from now on our main object will be the free shuffle operad.
We will use the well-known presentation of elements of the free operad by {\it decorated trees}.
Our  rooted tree is a connected graph of genus $0$, whose vertices are labeled (by operations).
Each vertex should have at least one input and exactly one output, corresponding to edges. The ends of edges, which are not vertices are called {\it leaves}, except for one. There should be exactly one edge, which has neither a vertex nor  a leave at one end, but instead  a distinguished output vertex, called the {\it root} of the tree.
There are obvious simplest trees: the {\it degenerate tree}, containing no vertices, and {\it corolla},
containing only one vertex.

If we want to put a tree on the plane, we just fix an increasing (from left to right) order for the inputs in each vertex.
It is a usual procedure to order inputs of a given vertex $i$ as follows: we compare the minimal leaves,
one can reach from the given input, if this leave is smaller, the input is smaller. They are situated in the plane accordingly. This particular way of putting the tree on the plane  corresponds to the choice of a shuffle representative.

We will think of the free shuffle operad $ \cal F_{\cal P}$, generated by a symmetric collection
$\cal P$, as a set of planar representatives of trees corresponding to basic (corolla) operations of the collection (in each arity ${\cal P} (n)$), and all trees obtained from them by elementary shuffle compositions. The latter can be defined in terms of drafting and relabeling the trees.
We call those planar representatives shuffle trees {\it tree-monomials}.

\begin{definition} We say that the {\it degree} of a tree-monomial is a number of operations (vertices) involved in it.

\end{definition}

Now we can define the first essential for the Gr\"obner bases theory notion of {\it divisibility } for the tree-monomials.

\begin{definition} A tree-monomial $w$ is divisible by a tree-monomial $v$, if the underlying tree for $w$ (non-labeled) contains as a (non-labeled) subtree an underlying tree of $v$, and decorations are compatible in the following way: the vertices which are common for both trees $w$ and $v$ have the same labels and the leaves of $v$ are decorated according to the following rule: first is a leave through which we can get to the smallest leave of the tree $w$, second -- to the second smallest, and so on. If $v$ has such labeling, it divides $w$.
\end{definition}

Since all tree-monomials of the free operad  are obtained from corollas by elementary shuffle compositions, it is easy to see that $w$ can be obtained form $v$ by a sequence of elementary shuffle compositions.

Here we can see a difference with the notion of divisibility for monomials in the free associative algebra, which are 'one-dimensional': $w$ is divisible by $v$ means, there exists $r,l$, such that $w=lvr$. Shuffle trees are 'two-dimensional', so composition could be applied not only from the left, or from the right.

Next important step is to  define an ordering on the tree-monomials, which is compatible with  shuffle compositions.

More precisely, we say, that the ordering on tree-monomials from ${\cal F}_{\cal P}$ is {\it admissible}, if

first, the elements of smaller arity are smaller:

$$a \in {\cal F}_{\cal P}(n), b \in {\cal F}_{\cal P}(m), n<m \Longrightarrow a<b \footnote{ Note that this condition could be omitted, provided the d.c.c. property holds for the ordering. In fact, even milder restriction on the ordering  is  actually needed. For example, if relations are homogeneous, then the ordering, where elements of bigger arity are smaller is also  admissible }$$

and if for $a,a' \in {\cal F}_{\cal P}(n)\,$ and $\,b,b' \in {\cal F}_{\cal P}(m)$

$$ a \leq a' \,\, {\rm and} \,\,\, b \leq b',$$

then for all elementary shuffle compositions

$$ a \circ_{i,\sigma} b \leq a' \circ_{i,\sigma} b'.$$

We give here two examples of ordering, obeying this  property.

I.{\it Path-lexicographic ordering}

To explain this ordering we encode a tree by a certain sequence consisting of words in an alphabet corresponding to  operations, and one permutation of the labels on the leaves.
Let $a$ be a tree-monomial with $n$ leaves. Associate to $a$ a sequence of words $(u_1,\cdots,u_n)$, obtained in the following way: for the leave marked by $i$, the word $u_i$ consists of the sequence
(of operations), marking those vertices of the tree, one has to pass going from the root to the leave marked $i$. It is, of course, unique. However the information contained in these sequences is still not enough to distinguish tree-monomials. So we add an extra bit to the sequence:  a permutation $\sigma \in S_n$, which lists the labels on the leaves of the tree-monomial (according to the definition it is a planar tree, so we can list  leaves from left to right).
The described map from tree-monomials to sequences of the type $(u_1,\cdots,u_n, \sigma)$ is an injection.

Hence if we define how to order such sequences, it will give us an ordering on the tree-monomials.
The order on the sequences will be degree-lexicographical:

first, the longer sequence is bigger: $(u_1,\cdots,u_n, \sigma)>(u'_1,\cdots,u'_m, \sigma')$ if $n>m$;

if sequences have the same length, we compare sequences element-wise,
using the following order on words and permutations.

Words are compared degree-lexicographically (from left to right).

If all words coincide, we compare permutations, for them we use reverse lexicographical order:
 suppose for two permutations $\sigma$ and $\sigma'$ the first from the left position, where entries $\sigma_i$ and $\sigma'_i$ are different is $i$, then if  $\sigma_i > \sigma'_i$, we say that $\sigma < \sigma'$.

 This ordering is admissible. Of course it allows slight variations, we can use the reverse lexicographic order instead of the usual one, compare words from right to left, and so on.

 The choice of  a correct ordering might be very essential for the conclusions one can deduce from the Gr\"obner bases computations.

Let us describe here also another type of possible ordering.

II.{\it Forest-lexicographic ordering}

Let $a $ and $b$ be  tree-monomials with leaves labeled by  arbitrary finite subsets $I, I' \subset \mathbb N$ respectively. Consider first operations at the roots of $a$ and $b$, which we call $A$ and $B$ respectively. We have $k$ and $l$ trees, where $k$ and $l$ defined by arities of the operations $A$ and $B$, starting at $A$ and $B$, so:

$$
a=A(a_1,\cdots,a_k); \, b=B(b_1,\cdots,b_l)
$$

Leaves of the trees $a$ and $b$ become  disjoint unions of the sets of leaves of trees
$a_1,\cdots,a_k$ and $b_1,\cdots,b_l$ which we denote $I_j \subset I$ and  $I'_j \subset I'$ respectively.
Thus,

$$ I = I_1 \sqcup \cdots \sqcup I_k, \, I' = I'_1 \sqcup \cdots \sqcup I'_l, $$
where, as usual, for tree-monomoials we have $ {\rm min} I_1 < \cdots < {\rm min} I_k$ and
$ {\rm min} I'_1 < \cdots < {\rm min} I'_l.$

We compare $a$ and $b$ as follows.
First, we compare sets $I$ and $I'$ of leaves of those trees.
If $I$ and $I'$ are arbitrary finite subsets of natural numbers, to compare them we first order the numbers:

$$ I=\{i_1<\cdots<i_k\}, \, I'=\{i'_1<\cdots<i'_l\},$$

and then order them (reverse)lexicographically: suppose $1<2<\cdots<n$, and if  $j$th place is the first (from the left) where $i_j < i'_j$ then $I > I'$.
If $I < I'$ then $a < b$.

When $I=I'$, we compare $A$ and $B$: if $A<B$, then $a<b$.

In the case  $I=I'$ and $A=B$, we say that $a<b$ if:
for the smallest $j$, such that pairs $(I_j, a_j) \neq (I'_j, b_j)$ do not coincide, either
$ I_j < I'_J$ or $ I_j = I'_J$ and $a_j<b_j$.
It is a recursive definition of the ordering.

\vskip5 mm

Since we have now the ordered monoid of tree-monomials with shuffle composition, as a multiplication, compatible with the ordering, we can define {\it reduction} and proceed with the definition of
Gr\"obner basis. Then, after defining the concept of {\it ambiguity} and {\it s-polynomial} we can formulate a criterion that a given set of elements of the free operad forms a Gr\"obner basis, and the Buchberger algorithm for constructing the Gr\"obner basis.

The following steps in the Gr\"obner bases theory are absolutely analogous to the case of the bases in the free associative algebras.

If the tree $w$ is divisible by $v$, there is a sequence of shuffle compositions which could be applied to $v$ to obtain $w$, denote it by $m_{w,v}$. It is possible to apply the same sequence of shuffle compositions to any other tree $z$ with the same set of inputs as in $v$. The result of this operation we denote by $m_{w,v}(z)$.

If we have two tree-polynomials (linear combination of tree-monomials) $f$ and $g$ and the highest tree-monomials of them divide one another: $\bar f$ is divisible by $\bar g$, then we define a result of
a {\it reduction} of $f$ by $g$, as the polynomial

$$ r_g(f)= f - \frac{c_f}{c_g} m_{\bar f,\bar g}(g).$$

The reduction has the property that the resulting polynomial is smaller then the initial one, due to the admissibility of the ordering w.r.t. the composition product.

\begin{definition}
The Gr\"obner basis of the  operadic ideal $I$ is the set of tree-polynomials, such that any element of $I$ can be reduced to zero by elements of this set.
\end{definition}

To give a criterion, which allows to check whether the given set of tree-polynomials is a  Gr\"obner basis, we need two more definitions.

\begin{definition}
If the monomial is divisible by two other monomials, it is called  {\it an ambiguity}.
\end{definition}

It means the monomial allows two different applications of reduction. The polynomial which we get if we subtract results of those two different applications of reductions is called $s$-{\it polynomial} corresponding to the ambiguity.

{\bf Criterion} If all ambiguities formed by the set of tree-polynomials are solvable (or there are no ambiguities), which means that corresponding  $s$-{\it polynomials} can be reduced to zero, then this set of tree-polynomials forms a Gr\"obner  basis of the ideal generated by them.

The Buchberger algorithm for construction of the Gr\"obner basis, from the given set of tree-polynomials
also works exactly like in algebra case.

We will use the following Priddy's criterion of Koszulity, which in operadic setting is explained in \cite{Bruno}.

\begin{theorem}\label{Pr}
If an operad $\cal P$
 has a presentation by generators and relations with a quadratic Gr\"obner basis (basis of degree two), then it is Koszul.
\end{theorem}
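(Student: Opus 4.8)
The plan is to follow the standard route for Priddy-type criteria: pass to the associated graded \emph{monomial} operad, establish Koszulity there by a direct combinatorial computation, and then lift Koszulity back to $\mathcal{P}$ by a filtration argument. Throughout I assume $\mathcal{P}$ is quadratic (generators in degree one, relations in degree two) so that "quadratic Gr\"obner basis" makes sense, and I take as the working definition of Koszulity the acyclicity, off the diagonal bidegree, of the Koszul complex of $\mathcal{P}$ (equivalently, that the canonical map from the Koszul dual cooperad to the bar construction is a quasi-isomorphism).

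First I would use the Diamond Lemma for operads, set up above, to produce a linear basis of $\mathcal{P}$ given by the \emph{normal} tree-monomials, namely those not divisible by any leading term $\bar g$ of the quadratic Gr\"obner basis $G$. Let $\mathcal{P}_0$ denote the monomial operad presented by the same generators, but with the leading monomials $\{\bar g : g\in G\}$ as relations. Because $G$ is a Gr\"obner basis, the normal tree-monomials form a basis of \emph{both} $\mathcal{P}$ and $\mathcal{P}_0$; hence the weight filtration on $\mathcal{P}$ induced by the admissible ordering yields an isomorphism $\operatorname{gr}\mathcal{P}\cong\mathcal{P}_0$ of graded $S$-modules, in fact of operads.

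Second, I would prove directly that the quadratic monomial operad $\mathcal{P}_0$ is Koszul. For a monomial operad the Koszul dual cooperad, the bar construction, and the Koszul complex all admit explicit combinatorial descriptions in terms of the leading tree-monomials, and the twisting differential becomes triangular in the monomial basis. One then checks, by an explicit contracting homotopy (the operadic analogue of the elementary fact that a quadratic monomial associative algebra is Koszul), that the homology is concentrated in the diagonal bidegree. This is the combinatorial heart of the argument and can be done arity by arity.

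Third, I would transfer Koszulity from $\mathcal{P}_0$ to $\mathcal{P}$. The weight filtration induces a filtration on the Koszul complex of $\mathcal{P}$ compatible with the twisting differential, and the associated spectral sequence has $E^0$-page equal to the Koszul complex of $\operatorname{gr}\mathcal{P}=\mathcal{P}_0$. By the second step this page is acyclic off the diagonal; since in each fixed arity there are only finitely many tree-monomials of each degree, the filtration is exhaustive and bounded, the spectral sequence converges, and its degeneration forces the Koszul complex of $\mathcal{P}$ itself to be acyclic off the diagonal. That is precisely the Koszulity of $\mathcal{P}$. The main obstacle is this last step: one must verify that the filtration is genuinely compatible with the differential and bounded arity-by-arity, so that convergence holds and the conclusion really concerns $\mathcal{P}$ and not merely $\operatorname{gr}\mathcal{P}$. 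Once the grading is controlled in each arity the bookkeeping is routine, but it is here, together with the explicit homotopy for monomial operads in the second step, that all the real work resides.
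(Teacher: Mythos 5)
The paper does not actually prove Theorem \ref{Pr}; it quotes it as the operadic Priddy/PBW criterion and refers to \cite{Bruno} for the argument. Your three-step plan --- use the Diamond Lemma to identify the associated graded of $\mathcal{P}$ with the quadratic monomial operad on the leading terms, prove Koszulity of quadratic monomial operads by an explicit contracting homotopy, and transfer Koszulity back via the spectral sequence of the weight filtration --- is precisely the standard proof given in that reference, so your approach coincides with the one the paper relies on.
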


\section{Deformations of the operad {\cal A}ss}\label{Def}

In the paper  \cite{FG} there is a list of 10 a priori possibilities to introduce the notion of Hom--associative algebra. Let us have an unary linear operation $\alpha : A \to A$ on the linear space of an algebra $A$. A priori, we can consider  various generalised versions of associativity, defined w.r.t. $\alpha$. The list of possible identities \cite{FG} is the following:

\vspace{5mm}

$$ I_1: \,\, (ab)\a(c)=\a(a)bc$$
$$ I_2: \,\,  (a \a(b))c=a(\a(b)c)$$
$$ I_3: \,\,  (\a(a)b)c=a(b\a(c))$$

$$ II_0: \,\,  (\a(ab))c=a(\a(bc))$$
$$ II_1: \,\,  (\a(a)\a(b))c=a(\a(b)\a(c))$$
$$ II_2: \,\,  (\a(a)b))\a(c)=\a(a)(b\a(c))$$
$$ II_3: \,\,  (a\a(b))\a(c)=\a(a)(\a(b)c)$$

$$ III: \,\,  \a((ab)c)=\a(a(bc))$$
$$ III': \,\,  \a(ab))\a(c)=\a(a)(\a(bc))$$
$$ III'': \,\,  (\a(a)\a(b))\a(c)=\a(a)(\a(b)\a(c))$$

\vskip1cm

The corresponding operads, which carry information on the identities $I_0 $ --- $III''$ we  denote
${\cal O}_{I_0},\cdots,{\cal O}_{III''}$ respectively. Our goal is to classify them with respect to the confluence property of their natural presentation by generators and relations, as a nonsymmetric operad. Those presentations are as follows. Generators of these operads are the unary operation $\a$, denoted by an empty dot, and the binary operation $\mu$ denoted by a filled dot:
\setlength{\unitlength}{0.7mm}
$$\bigg\{
\begin{picture}(10,20)(0,15)
\drawline(5,15)(5,20)
\drawline(5,20)(10,25)
\drawline(5,20)(0,25)
\put(5,20){\circle*{1.7}}
\end{picture},\
\begin{picture}
(5,5)(5,-5)
\drawline(5,-5)(5,0)
\drawline(5,0)(5,5)
\put(5,0){\circle{1.7}}
\end{picture}
\bigg\}.
$$

The first one corresponds to the multiplication $\mu$ and the second to the linear map $\alpha$.

The relations are respectively:

$I_1$
\begin{center}
\setlength{\unitlength}{1.0mm}
\begin{picture}
(25,15)(5,5)
\drawline(15,0)(15,5)
\drawline(15,5)(20,10)
\drawline(15,5)(10,10)
\drawline(20,10)(25,15)
\drawline(10,10)(5,15)
\drawline(10,10)(15,15)
\put(15,5){\circle*{1.7}}
\put(20,10){\circle{1.7}}
\put(10,10){\circle*{1.7}}
\end{picture}
-
\begin{picture}
(25,15)(5,5)
\drawline(15,0)(15,5)
\drawline(15,5)(20,10)
\drawline(15,5)(10,10)
\drawline(20,10)(25,15)
\drawline(20,10)(15,15)
\drawline(10,10)(5,15)
\put(15,5){\circle*{1.7}}
\put(20,10){\circle*{1.7}}
\put(10,10){\circle{1.7}}
\end{picture},
\end{center}

$I_2$
\begin{center}
\setlength{\unitlength}{0.8mm}
\begin{picture}
(35,20)(5,5)
\drawline(20,0)(20,5)
\drawline(20,5)(15,10)(10,15)(5,20)
\drawline(20,5)(25,10)(30,15)(35,20)
\drawline(15,10)(20,15)(25,20)
\put(20,5){\circle*{1.7}}
\put(15,10){\circle*{1.7}}
\put(20,15){\circle{1.7}}
\end{picture}
-
\begin{picture}
(35,20)(5,5)
\drawline(20,0)(20,5)
\drawline(20,5)(15,10)(10,15)(5,20)
\drawline(20,5)(25,10)(30,15)(35,20)
\drawline(25,10)(20,15)(15,20)
\put(20,5){\circle*{1.7}}
\put(25,10){\circle*{1.7}}
\put(20,15){\circle{1.7}}
\end{picture}
\end{center}

$I_3$
\begin{center}
\setlength{\unitlength}{0.8mm}
\begin{picture}
(35,20)(5,5)
\drawline(20,0)(20,5)
\drawline(20,5)(15,10)(10,15)(5,20)
\drawline(20,5)(25,10)(30,15)(35,20)
\drawline(15,10)(20,15)(25,20)
\put(20,5){\circle*{1.7}}
\put(15,10){\circle*{1.7}}
\put(10,15){\circle{1.7}}
\end{picture}
-
\begin{picture}
(35,20)(5,5)
\drawline(20,0)(20,5)
\drawline(20,5)(15,10)(10,15)(5,20)
\drawline(20,5)(25,10)(30,15)(35,20)
\drawline(25,10)(20,15)(15,20)
\put(20,5){\circle*{1.7}}
\put(25,10){\circle*{1.7}}
\put(30,15){\circle{1.7}}
\end{picture}
\end{center}

$II_0$
\begin{center}
\setlength{\unitlength}{0.8mm}
\begin{picture}
(35,20)(5,5)
\drawline(20,0)(20,5)
\drawline(20,5)(15,10)(10,15)(5,20)
\drawline(20,5)(25,10)(30,15)(35,20)
\drawline(10,15)(15,20)
\put(20,5){\circle*{1.7}}
\put(15,10){\circle{1.7}}
\put(10,15){\circle*{1.7}}
\end{picture}
-
\begin{picture}
(35,20)(5,5)
\drawline(20,0)(20,5)
\drawline(20,5)(15,10)(10,15)(5,20)
\drawline(20,5)(25,10)(30,15)(35,20)
\drawline(30,15)(25,20)
\put(20,5){\circle*{1.7}}
\put(25,10){\circle{1.7}}
\put(30,15){\circle*{1.7}}
\end{picture}
\end{center}

$II_1$
\begin{center}
\setlength{\unitlength}{0.8mm}
\begin{picture}
(35,20)(5,5)
\drawline(20,0)(20,5)
\drawline(20,5)(15,10)(10,15)(5,20)
\drawline(20,5)(25,10)(30,15)(35,20)
\drawline(15,10)(20,15)(25,20)
\put(20,5){\circle*{1.7}}
\put(15,10){\circle*{1.7}}
\put(20,15){\circle{1.7}}
\put(10,15){\circle{1.7}}
\end{picture}
-
\begin{picture}
(35,20)(5,5)
\drawline(20,0)(20,5)
\drawline(20,5)(15,10)(10,15)(5,20)
\drawline(20,5)(25,10)(30,15)(35,20)
\drawline(25,10)(20,15)(15,20)
\put(20,5){\circle*{1.7}}
\put(25,10){\circle*{1.7}}
\put(20,15){\circle{1.7}}
\put(30,15){\circle{1.7}}
\end{picture}
\end{center}

$II_2$
\begin{center}
\setlength{\unitlength}{0.8mm}
\begin{picture}
(35,20)(5,5)
\drawline(20,0)(20,5)
\drawline(20,5)(15,10)(10,15)(5,20)
\drawline(20,5)(25,10)(30,15)(35,20)
\drawline(15,10)(20,15)(25,20)
\put(20,5){\circle*{1.7}}
\put(15,10){\circle*{1.7}}
\put(10,15){\circle{1.7}}
\put(25,10){\circle{1.7}}
\end{picture}
-
\begin{picture}
(35,20)(5,5)
\drawline(20,0)(20,5)
\drawline(20,5)(15,10)(10,15)(5,20)
\drawline(20,5)(25,10)(30,15)(35,20)
\drawline(25,10)(20,15)(15,20)
\put(20,5){\circle*{1.7}}
\put(25,10){\circle*{1.7}}
\put(15,10){\circle{1.7}}
\put(30,15){\circle{1.7}}
\end{picture}
\end{center}

$II_3$
\begin{center}
\setlength{\unitlength}{0.8mm}
\begin{picture}
(35,20)(5,5)
\drawline(20,0)(20,5)
\drawline(20,5)(15,10)(10,15)(5,20)
\drawline(20,5)(25,10)(30,15)(35,20)
\drawline(15,10)(20,15)(25,20)
\put(20,5){\circle*{1.7}}
\put(15,10){\circle*{1.7}}
\put(20,15){\circle{1.7}}
\put(25,10){\circle{1.7}}
\end{picture}
-
\begin{picture}
(35,20)(5,5)
\drawline(20,0)(20,5)
\drawline(20,5)(15,10)(10,15)(5,20)
\drawline(20,5)(25,10)(30,15)(35,20)
\drawline(25,10)(20,15)(15,20)
\put(20,5){\circle*{1.7}}
\put(25,10){\circle*{1.7}}
\put(20,15){\circle{1.7}}
\put(15,10){\circle{1.7}}
\end{picture}
\end{center}

$III$
\begin{center}
\setlength{\unitlength}{0.8mm}
\begin{picture}
(25,20)(5,5)
\drawline(15,0)(15,5)(15,10)
\drawline(15,10)(10,15)(5,20)
\drawline(15,10)(20,15)(25,20)
\drawline(10,15)(15,20)
\put(15,10){\circle*{1.7}}
\put(10,15){\circle*{1.7}}
\put(15,5){\circle{1.7}}
\end{picture}
-
\begin{picture}
(25,20)(5,5)
\drawline(15,0)(15,5)(15,10)
\drawline(15,10)(10,15)(5,20)
\drawline(15,10)(20,15)(25,20)
\drawline(20,15)(15,20)
\put(15,10){\circle*{1.7}}
\put(20,15){\circle*{1.7}}
\put(15,5){\circle{1.7}}
\end{picture}
\end{center}

$III'$
\begin{center}
\setlength{\unitlength}{0.8mm}
\begin{picture}
(35,20)(5,5)
\drawline(20,0)(20,5)
\drawline(20,5)(15,10)(10,15)(5,20)
\drawline(20,5)(25,10)(30,15)(35,20)
\drawline(10,15)(15,20)
\put(20,5){\circle*{1.7}}
\put(15,10){\circle{1.7}}
\put(10,15){\circle*{1.7}}
\put(25,10){\circle{1.7}}
\end{picture}
-
\begin{picture}
(35,20)(5,5)
\drawline(20,0)(20,5)
\drawline(20,5)(15,10)(10,15)(5,20)
\drawline(20,5)(25,10)(30,15)(35,20)
\drawline(30,15)(25,20)
\put(20,5){\circle*{1.7}}
\put(25,10){\circle{1.7}}
\put(30,15){\circle*{1.7}}
\put(15,10){\circle{1.7}}
\end{picture}
\end{center}

$III''$
\begin{center}
\setlength{\unitlength}{0.8mm}
\begin{picture}
(35,20)(5,5)
\drawline(20,0)(20,5)
\drawline(20,5)(15,10)(10,15)(5,20)
\drawline(20,5)(25,10)(30,15)(35,20)
\drawline(15,10)(20,15)(25,20)
\put(20,5){\circle*{1.7}}
\put(15,10){\circle*{1.7}}
\put(20,15){\circle{1.7}}
\put(10,15){\circle{1.7}}
\put(30,15){\circle{1.7}}
\end{picture}
-
\begin{picture}
(35,20)(5,5)
\drawline(20,0)(20,5)
\drawline(20,5)(15,10)(10,15)(5,20)
\drawline(20,5)(25,10)(30,15)(35,20)
\drawline(25,10)(20,15)(15,20)
\put(20,5){\circle*{1.7}}
\put(25,10){\circle*{1.7}}
\put(20,15){\circle{1.7}}
\put(30,15){\circle{1.7}}
\put(10,15){\circle{1.7}}
\end{picture}
\end{center}



\vskip17mm

\begin{theorem} Only one Hom--deformation of the associative operad ${\cal A}ss$ is confluent, namely the one given by relation $II_0$. 
\end{theorem}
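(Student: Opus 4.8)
The plan is to treat each of the ten identities as a one-relation presentation of a nonsymmetric operad and to decide confluence directly via the operadic Diamond Lemma, i.e. the Criterion of Section \ref{Gr}. First I would fix, once and for all, an admissible order on the tree-monomials of the free nonsymmetric operad $\mathcal{F}$ generated by the binary operation $\mu$ and the unary operation $\alpha$ --- for concreteness the path-lexicographic order of Section \ref{Gr}, with a fixed choice of which generator is the larger letter and a fixed tie-break. Each identity is a tree-polynomial $r=t'-t''$, a difference of two tree-monomials built from two copies of $\mu$ and one, two or three copies of $\alpha$ (so of degree three, four or five, depending on the identity). Comparing $t'$ and $t''$ in the chosen order singles out the leading monomial $\bar r$ and turns the relation into a rewriting rule that rewrites the monomial $\bar r$ into the remaining, smaller term.

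By the Criterion, the singleton $\{r\}$ is a Gr\"obner basis --- equivalently, the presentation of the corresponding operad is confluent --- precisely when every ambiguity formed by $\bar r$ with itself is solvable. Since $\mathcal{F}$ is nonsymmetric, ambiguities are overlaps of planar trees: minimal tree-monomials divisible by $\bar r$ in two distinct ways, each arising by grafting one copy of $\bar r$ onto another so that the two copies share at least one internal vertex. Because $\bar r$ has only a handful of vertices (one root, one internal binary vertex, and the unary vertices), there are finitely many self-overlaps, of degree between $\deg\bar r+1$ and $2\deg\bar r-1$, and I would enumerate them explicitly for each identity. For every ambiguity I then form the $s$-polynomial and reduce it using the single rule, iterating until a normal form is reached.

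The heart of the argument is the resulting case analysis. For $II_0$, namely $\mu(\alpha(\mu(a,b)),c)=\mu(a,\alpha(\mu(b,c)))$, I expect every self-overlap to close up: each $s$-polynomial should reduce to zero, so $\{r\}$ is a Gr\"obner basis and the presentation of $\mathcal{O}_{II_0}$ is confluent (and, by Theorem \ref{Pr}, $\mathcal{O}_{II_0}$ is then Koszul). For each of the remaining nine identities I would instead exhibit a single explicit ambiguity whose $s$-polynomial has a \emph{nonzero} normal form; one unsolvable ambiguity already shows that the natural one-relation presentation is not a Gr\"obner basis, hence not confluent. It is worth isolating conceptually why $II_0$ is the winner: writing $\beta(a,b):=\alpha(\mu(a,b))$, the identity $II_0$ reads $\mu(\beta(a,b),c)=\mu(a,\beta(b,c))$, a clean associativity-type compatibility between $\mu$ and $\beta$ that lets the two branches of each overlap be rewritten to a common monomial, whereas in the other identities the $\alpha$'s sit on individual leaves in a way that obstructs such rewriting.

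The main obstacle will be the bookkeeping of the overlaps rather than any single deep step: in the planar-tree setting one must list all ways two copies of each leading term can be superimposed, and for the nine non-confluent cases one must verify that the exhibited $s$-polynomial is \emph{genuinely} irreducible --- that no further application of the rule applies to either of its monomials --- and not merely irreducible in appearance. A secondary point requiring care is the dependence on the chosen admissible order: since the leading term, and hence the list of ambiguities, changes if the order changes, I would fix one admissible order throughout and, where the verdict could a priori depend on it, check that the confluence conclusion for each identity is insensitive to the admissible choices relevant here.
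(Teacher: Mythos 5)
Your proposal follows essentially the same route as the paper: apply the Gr\"obner basis machinery for nonsymmetric operads, turn each identity into a rewriting rule, check that the self-overlaps of the leading term resolve for $II_0$ (yielding Koszulity via the Priddy criterion) and exhibit an unresolvable ambiguity for each of the other nine identities. The paper's proof is exactly this case analysis, presented via the tree diagrams for the resolving ambiguity of $II_0$ and a sample non-resolving one for $I_2$; your added remark that $\beta(a,b):=\alpha(\mu(a,b))$ turns $II_0$ into an associativity-type relation is a nice conceptual gloss but not a different argument.
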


\begin{proof}

Applying the Gr\"obner basis theory  for nonsymmetric operads \cite{Hof, Dots, Bruno}, to these presentations, we can check that only one of them satisfies the confluence property:

\vspace{5 mm}

\[\begin{array}{ccccc}
\begin{picture}(20,30)(5,-5)
\drawline(20,0)(20,5)(15,10)(15,15)(10,20)(10,25)(5,30)
\drawline(10,25)(15,30)
\drawline(15,15)(20,20)
\drawline(20,5)(25,10)
\put(20,5){\circle*{1.7}}
\put(15,15){\circle*{1.7}}
\put(10,25){\circle*{1.7}}
\put(15,10){\circle{1.7}}
\put(10,20){\circle{1.7}}
\end{picture}
 & \  & \longrightarrow & \ &
  \begin{picture}(25,20)(5,-5)
\drawline(15,0)(15,5)(10,10)(10,15)(5,20)
\drawline(15,5)(20,10)(20,15)(25,20)
\drawline(10,15)(14,20)
\drawline(20,15)(16,20)
\put(15,5){\circle*{1.7}}
\put(10,15){\circle*{1.7}}
\put(20,15){\circle*{1.7}}
\put(10,10){\circle{1.7}}
\put(20,10){\circle{1.7}}
\end{picture}
  \\
 \downarrow & \  &\   &\   & \downarrow   \\
 \begin{picture}(20,35)(5,0)
\drawline(15,0)(15,5)(10,10)(10,15)(5,20)
\drawline(10,15)(15,20)(15,25)(10,30)
\drawline(15,25)(20,30)
\drawline(15,5)(20,10)
\put(15,5){\circle*{1.7}}
\put(10,15){\circle*{1.7}}
\put(15,25){\circle*{1.7}}
\put(10,10){\circle{1.7}}
\put(15,20){\circle{1.7}}
\end{picture}
  & \rightarrow  &
   \begin{picture}(20,35)(5,0)
\drawline(10,0)(10,5)(5,10)
\drawline(10,5)(15,10)(15,15)(10,20)(10,25)(5,30)
\drawline(10,25)(15,30)
\drawline(15,15)(20,20)
\put(10,5){\circle*{1.7}}
\put(15,15){\circle*{1.7}}
\put(10,25){\circle*{1.7}}
\put(15,10){\circle{1.7}}
\put(10,20){\circle{1.7}}
\end{picture}
& \rightarrow &
\begin{picture}(20,35)(5,0)
\drawline(10,0)(10,5)(5,10)
\drawline(10,5)(15,10)(15,15)(10,20)
\drawline(15,15)(20,20)(20,25)(15,30)
\drawline(20,25)(25,30)
\put(10,5){\circle*{1.7}}
\put(15,15){\circle*{1.7}}
\put(20,25){\circle*{1.7}}
\put(15,10){\circle{1.7}}
\put(20,20){\circle{1.7}}
\end{picture}
\end{array}
\]

\vspace{5 mm}


Using the analogue of the Priddy criterion for nonsymmertic operads we deduce that ${\cal O}_{II_0}$ is Koszul.

Let us give an example of the case where the confluence does not hold. Consider, say, an ambiguity formed by relation $I_2$.


\vspace{5 mm}

\[
\begin{array}{ccc}
\begin{picture}(40,30)(5,-5)
\drawline(20,0)(20,5)(15,10)(10,15)(5,20)
\drawline(20,5)(25,10)(30,15)(35,20)
\drawline(15,10)(20,15)(25,20)
\drawline(10,15)(15,20)
\put(20,5){\circle*{1.7}}
\put(15,10){\circle*{1.7}}
\put(10,15){\circle*{1.7}}
\put(20,15){\circle{1.7}}
\put(30,15){\circle{1.7}}
\end{picture}
  & \longrightarrow &
 \begin{picture}(40,30)(5,-5)
\drawline(20,0)(20,5)(15,10)(10,15)(5,20)
\drawline(20,5)(25,10)(30,15)(35,20)
\drawline(25,10)(20,15)(16,20)
\drawline(10,15)(14,20)
\put(20,5){\circle*{1.7}}
\put(25,10){\circle*{1.7}}
\put(10,15){\circle*{1.7}}
\put(15,10){\circle{1.7}}
\put(20,15){\circle{1.7}}
\end{picture}
  \\
 \downarrow & \       &  \neq  \\
 \begin{picture}(40,30)(5,-5)
\drawline(20,0)(20,5)(15,10)(10,15)(5,20)
\drawline(20,5)(25,10)(30,15)(35,20)
\drawline(15,10)(20,15)(25,20)
\drawline(20,15)(15,20)
\put(20,5){\circle*{1.7}}
\put(15,10){\circle*{1.7}}
\put(10,15){\circle{1.7}}
\put(20,15){\circle*{1.7}}
\put(30,15){\circle{1.7}}
\end{picture}
  & \longrightarrow  &
\begin{picture}(40,30)(5,-5)
\drawline(20,0)(20,5)(15,10)(10,15)(5,20)
\drawline(20,5)(25,10)(30,15)(35,20)
\drawline(25,10)(20,15)(15,20)
\drawline(20,15)(25,20)
\put(20,5){\circle*{1.7}}
\put(15,10){\circle{1.7}}
\put(10,15){\circle{1.7}}
\put(20,15){\circle*{1.7}}
\put(25,10){\circle*{1.7}}
\end{picture}
\end{array}
\]

\vspace{5 mm}



The verification of the confluence for all other operads in the list shows that the confluence does not hold for them.

\end{proof}

\section{Hom Gelfand-Dorfman algebras}\label{HGD}

In this paragraph we define a Hom--deformation of the Gelfand-Dorfman (GD) algebra, which have been defined, and its origin discussed in the introduction.















We define a Hom--GD algebra as follows. It is a quadruple $(A,\circ,[,],\a)$, where $A$ is a linear space, $\circ$ and $[,]$ are bilinear operations from $A \times A$ to $A$, and $\a$ is a linear space homomorphism from $A$ to $A$. They should satisfy the following identities:

\vspace{5mm}

$(1)^* \,\, (a\circ b)\circ \a(c)-\a(a)\circ(b\circ c)=(b\circ a)\circ\a(c)-\a(b)\circ(a\circ c) \,\,$ (Hom--left-symmetric)

\vspace{2mm}

$(2)^* \,\, (a\circ b)\circ\a(c)=(a\circ c)\circ\a(b) \,\,$  (Hom--right Novikov)

\vspace{2mm}

$(3)^* \,\, [a,b]=-[b,a] \,\,$

\vspace{2mm}

$(4)^* \,\, [[a,b],\a(c)]+[[c,a],\a(b)]+[[b,c],\a(a)]=0 \,\,$

\vspace{2mm}

$(5)^* \,\, [c\circ a,\a(b)]-[c\circ b,\a(a)]+[c,a]\circ\a(b)-[c,b]\circ\a(a)-\a(c)\circ[a,b]=0 \,\,$

\vspace{5mm}

Since GD algebra is a Lie algebra w.r.t. the operation $[,]$ and a Novikov algebra w.r.t. the operation $\circ$, a Hom--GD algebra will be naturally Hom--Lie \cite{MS} and Hom--Novikov \cite{Yau}, but with the additional compatibility condition between two operations and $\a$, defined by $(5)^*$.

First, note that for the defined in such a way Hom--GD algebras the following natural connection with Hom--Novikov algebras holds.

\begin{theorem}
The algebra $(A, \circ_{\a}, [\circ]_{\a})$ with two  operations, one of which is Hom--Novikov, i.e. satisfies the identities $(1)^*,(2)^*$, and anotheris  obtained from it as $[a,b]_{\a}=a \circ_{\a} b - b \circ_{\a} a$, is a Hom--GD algebra.
\end{theorem}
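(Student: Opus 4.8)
The plan is to check the five defining identities of a Hom--GD algebra one at a time for the pair of operations, which I write simply as $\circ$ and $[\,,\,]$ (suppressing the subscript $\a$), where by hypothesis $\circ$ is Hom--Novikov and $[a,b]=a\circ b-b\circ a$. Two of the identities are free: $(1)^*$ and $(2)^*$ are exactly the Hom--Novikov axioms, so they hold by assumption, and the skew-symmetry $(3)^*$ is immediate from the definition of $[\,,\,]$ as a commutator. All the content therefore lies in deducing the Hom--Jacobi identity $(4)^*$ and the compatibility identity $(5)^*$ from $(1)^*$ and $(2)^*$ alone. The one tool I would isolate first is the rewritten form of the Hom--left--symmetry $(1)^*$, namely the identity
$$\a(a)\circ(b\circ c)-\a(b)\circ(a\circ c)=[a,b]\circ\a(c),$$
which is just $(1)^*$ with the two ``right'' monomials moved to the same side; both remaining verifications reduce to telescoping once this is in hand.

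For $(4)^*$ I would use only $(1)^*$; this is the Hom--analogue of the classical statement that the commutator of a left-symmetric product is a Lie bracket. Setting $\mathrm{as}(a,b,c)=(a\circ b)\circ\a(c)-\a(a)\circ(b\circ c)$, the axiom $(1)^*$ reads $\mathrm{as}(a,b,c)=\mathrm{as}(b,a,c)$. Expanding each cyclic term $[[a,b],\a(c)]$ of the Hom--Jacobiator into the summands $(x\circ y)\circ\a(z)$ and $\a(z)\circ(x\circ y)$, I would replace each difference $(x\circ y)\circ\a(z)-(y\circ x)\circ\a(z)$ by $\a(x)\circ(y\circ z)-\a(y)\circ(x\circ z)$ using the displayed identity. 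After this substitution all monomials are of the single type $\a(\cdot)\circ(\cdot\circ\cdot)$, each appearing exactly twice with opposite signs, so the Hom--Jacobiator telescopes to zero.

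The heart of the proof is $(5)^*$, where both axioms are needed. I would first expand all five bracketed terms of $(5)^*$ into monomials. The Hom--right--Novikov axiom $(2)^*$, which reads $(x\circ y)\circ\a(z)=(x\circ z)\circ\a(y)$, gives at once $(c\circ a)\circ\a(b)=(c\circ b)\circ\a(a)$, so the two ``doubled'' leading terms cancel. For the remaining six monomials I would use $(2)^*$ to rewrite $(a\circ c)\circ\a(b)$ and $(b\circ c)\circ\a(a)$ as $(a\circ b)\circ\a(c)$ and $(b\circ a)\circ\a(c)$, then apply the displayed form of $(1)^*$ to convert their difference back into left products. What survives is precisely the Hom--expression $[a,c]\circ\a(b)-[a,b]\circ\a(c)-[b,c]\circ\a(a)$; expanding these three commutators into six monomials $(x\circ y)\circ\a(z)$ and applying $(2)^*$ to bring them to a common form, one sees that they cancel in three pairs, establishing $(5)^*$.

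The step I expect to be most delicate is the bookkeeping in $(5)^*$: one must apply $(2)^*$ and $(1)^*$ in the right order so that the twist $\a$ always sits on the distinguished (outer) argument for which each axiom is stated, since only then do $(1)^*$ and $(2)^*$ apply verbatim rather than through a further rearrangement. Once the auxiliary identity $\a(a)\circ(b\circ c)-\a(b)\circ(a\circ c)=[a,b]\circ\a(c)$ is isolated, however, both $(4)^*$ and $(5)^*$ become routine sign-chasing, exactly paralleling the classical Novikov-to-GD computation with $\a$ inserted on the appropriate argument of every associator. Since $(1)^*$ through $(5)^*$ are then all verified, $(A,\circ,[\,,\,],\a)$ is a Hom--GD algebra.
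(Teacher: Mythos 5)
Your proof is correct and takes essentially the same route as the paper: expand every commutator into $\circ$-monomials and cancel using the Hom--left-symmetric identity $(1)^*$ (in the rearranged form $[a,b]\circ\a(c)=\a(a)\circ(b\circ c)-\a(b)\circ(a\circ c)$) together with the Hom--right-Novikov identity $(2)^*$. The only difference is one of detail: you carry out explicitly the bookkeeping for the Hom--Jacobi identity and the final cancellation in $(5)^*$, which the paper asserts in a single line.
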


\begin{proof} It is known that structure obtained in  such a way is Hom-Novikov, and of course from the Hom left-symmetric identity $(1)^*$ it follows that the associated Lie bracket $[\circ]_{\a}$ satisfies the Hom Jacobi identity. So we need to check only that $(5^*)$ is satisfied.
Indeed,
$$ [c\circ a,\a(b)]_{\a}-[c\circ b,\a(a)]_{\a}+[c,a]_{\a} \circ\a(b)-[c,b]_{\a} \circ\a(a)-\a(c)\circ[a,b]_{\a}= \,\,$$

$$(c\circ_{\a} a)\circ_{\a}\a(b) -\a(b)\circ_{\a}(c\circ_{\a} a)-(c\circ_{\a} b)\circ_{\a}\a(a)+\a(a)\circ_{\a}(c\circ_{\a} b)+$$

$$(c\circ_{\a}a-a\circ_{\a}c)\circ_{\a}\a(b) -(c\circ_{\a}b-b\circ_{\a}c)\circ_{\a}\a(a) -\a(c)\circ_{\a}(a\circ_{\a}b-b\circ_{\a}a).$$

Clearly, the right-hand side is canceled due to $(1^*)$ and $(2^*)$.

\end{proof}

We can consider a particular case of Hom--GD algebra defined by $\a: A \to A$, which is a homomorphism of GD algebras. Namely, we can introduce a new algebra with operations  $\circ_{\a}$ and $[,]_{\a}$,
where $a \circ_{\a} b = \a(a\circ b)$ and $[a,b]_{\a}=\a(a\circ b)$. We prove below, that this is indeed Hom--GD algebra.

\begin{theorem} Let $(A, \circ, [,])$ be a GD algebra and $\a$ a GD algebra morphism, then
$A_{\a}=(A, \circ_{\a}, [,]_{\a})$, where $a \circ_{\a} b = \a(a\circ b)$ and $[a,b]_{\a}=\a([a, b]_{\a})$, is a Hom--GD algebra.
\end{theorem}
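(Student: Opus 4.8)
The plan is to use the standard twisting (Yau-type) argument: since $\a$ is a GD-algebra morphism it commutes with both products, $\a(a\circ b)=\a(a)\circ\a(b)$ and $\a([a,b])=[\a(a),\a(b)]$, and this is exactly the feature that lets every Hom-identity collapse onto the corresponding untwisted identity. Throughout I would write the twisted operations as $a\circ_{\a}b=\a(a\circ b)$ and $[a,b]_{\a}=\a([a,b])$ (reading the definition in the statement in this way), and I would record at the outset that the untwisted $(A,\circ,[,])$ already satisfies $(1)$--$(5)$.

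The computational heart of the proof is a single observation applied repeatedly: after expanding a twisted monomial by the definitions and then pulling the inner $\a$'s outside via the morphism property, every degree-two monomial becomes $\a^2$ of the corresponding untwisted monomial. For instance I would compute $(a\circ_{\a}b)\circ_{\a}\a(c)=\a\big(\a(a\circ b)\circ\a(c)\big)=\a\big(\a((a\circ b)\circ c)\big)=\a^2\big((a\circ b)\circ c\big)$, and symmetrically $\a(a)\circ_{\a}(b\circ_{\a}c)=\a^2\big(a\circ(b\circ c)\big)$. The same absorption works for the mixed terms, so that e.g. $[c\circ_{\a}a,\a(b)]_{\a}=\a^2\big([c\circ a,b]\big)$ and $[c,a]_{\a}\circ_{\a}\a(b)=\a^2\big([c,a]\circ b\big)$, and for the nested brackets $[[a,b]_{\a},\a(c)]_{\a}=\a^2\big([[a,b],c]\big)$.

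With this in hand I would dispatch the five axioms in turn. Identity $(3)^*$ is immediate from linearity of $\a$ and $(3)$. For $(1)^*$, $(2)^*$ and $(4)^*$ each side of the required equality is $\a^2$ applied to the corresponding side of $(1)$, $(2)$, $(4)$ respectively, so the identities hold because $\a^2$ is linear and the untwisted identity holds. The only term-heavy case is $(5)^*$: expanding its five summands by the rule above turns the whole left-hand side into $\a^2$ of $[c\circ a,b]-[c\circ b,a]+[c,a]\circ b-[c,b]\circ a-c\circ[a,b]$, which vanishes by the untwisted compatibility $(5)$, and hence so does its image under $\a^2$.

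The main obstacle is not conceptual but bookkeeping: one must keep track, in $(5)^*$, of the five distinct twisted products and verify in each that the morphism property really produces a clean $\a^2$ with no leftover $\a$ sitting in the wrong slot. Once one confirms the uniform pattern that each twisted identity equals $\a^2$ of the untwisted identity, the result follows at once from the fact that $(A,\circ,[,])$ already satisfies $(1)$--$(5)$.
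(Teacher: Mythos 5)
Your proposal is correct and follows essentially the same route as the paper: the authors likewise use the morphism property to show that each twisted degree-two monomial equals $\alpha^2$ of the corresponding untwisted one (e.g.\ $[a,b]_{\alpha}\circ_{\alpha}\alpha(c)=\alpha^2([a,b]\circ c)$) and then deduce each Hom-identity from the untwisted identity applied under $\alpha^2$. Your reading of the evidently mistyped definition $[a,b]_{\alpha}=\alpha([a,b]_{\alpha})$ as $\alpha([a,b])$ matches the intended one.
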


\begin{proof} Since $\a$ is a GD algebra morphism, i.e. $\a(a\circ b)=\a(a)\circ \a(b)$ and
 $\a([a,b])=[\a(a),\a b]$,  it is easy to verify, that

 \vspace{5mm}

 $1.\,\, (a\circ_{\a} b)\circ_{\a} \a(c)=\a^2((a\circ b)\circ c)$,

 \vspace{2mm}

$2.\,\,[[a, b]_{\a}, \a(c)]_{\a}=\a^2([[a,b],c])$,

\vspace{2mm}

 $3.\,\,[a, b]_{\a} \circ_{\a} \a(c)=\a^2([a,b]\circ  c)$,

 \vspace{2mm}

 $4.\,\,[a\circ_{\a} b, c]_{\a}=\a^2([a,b]\circ c)$.

 \vspace{5mm}

 Indeed, for example,

 $$[a, b]_{\a} \circ_{\a} \a(c)= \a( \a([a,b]) \circ \a(c))  =       \a^2([a,b]\circ  c),$$

 The first equality allows to prove identities $(1^*)$ and $(2^*)$ for $A_{\a}$, the second one could be used to verify the Jacobi identity $(3^*)$. The last two guarantee identity $(5^*)$. The anticommutativity of $[,]_{\a}$ is immediate.

\end{proof}

In terms of deformations of an operad, this special case of deformations has the feature, that  set of relations of the deformed operad  ${\cal A}_h$ contains as a subset relations of ${\cal A}$, which is of course not the case in general. In general we have  ${\cal A}=\langle x \,|\, F (x)\rangle $, and
${\cal A}_h=\langle x,h \,|\,\tilde F (x,h)\rangle$ such that
$\tilde F (x,h)$ coincide with $F(x)$, when $h$ is trivial. In the case when $\a$ is an algebra homomorphism,
we have ${\cal A}=\langle x \,|\, F (x), G(x,h)\rangle$, where $G(x, id)=F(x)$.


\section{Conformal Lie operad}\label{CL}


We will study here an operad ${\cal CL}ie$, associated to Lie conformal algebra
structure, and derive its Koszulity. There is another way to describe this structure (defined in the introduction),
by taking their generating function, called
the lambda bracket structure.
We let for any $a,b \in R$

$$[a_{\lambda} b] = \sum\limits_{n=0}^{\infty}\frac{\l^n}{n!} a_{(n)} b.
$$
Then we get an equivalent definition of a Lie conformal algebra \cite{Kac},
\cite{KA}:
\begin{definition}
We define a binary operation, called {\it $\lambda$ -- bracket}, on the $\mathbb{C}[\partial]$ module $R$: $\,R \times R \to \mathbb{C}[\partial, \l] \otimes_{\mathbb{C}[\partial]} \R : \, a \otimes b \to [a_{\lambda} b]$, as an operation  satisfying  the following axioms.
\end{definition}

I.{\it (sesquilinearity)}

$$[\partial a_{\lambda} b]=-\lambda [a_{\lambda} b]$$

$$[a_{\lambda} \partial b]= (\partial +\lambda)[a_{\lambda} b]$$

II. {\it (antisymmetry)}

$$[b_{\lambda} a]=-[a_{-\lambda-\partial} b]$$

III. {\it (generalised Jacobi identity)}

$$[a_{\lambda}[b_{\mu} c]]-[b_{\mu}[a_{\lambda} c]]=[[a_{\lambda} b]_{\lambda+\mu} c]$$

The Gelfand-Dorfman algebras considered above are closely related to the Lie
conformal algebras, as it was explained in the introduction. Here we give a formal proof.
Namely, having at hands Gelfand-Dorfman algebra with two operations $(\circ, [,])$ we can construct the $\lambda$ bracket as follows:

\begin{proposition} The binary operation

$$[a_{\lambda} b] = [a,b] + \partial (a\circ b) + \lambda (a\circ b+ b\circ a)$$

satisfies axioms of $\lambda$ bracket if and only if the operations $(\circ, [,])$  form a Gelfand-Dorfman algebra.

\end{proposition}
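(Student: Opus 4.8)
The plan is to verify the three groups of axioms (I)–(III) one by one, in each case inserting the defining formula and then organising the resulting expression by powers of the formal parameters $\lambda,\mu$ and of the operator $\partial$. The space $R$ is the free $\mathbb{C}[\partial]$-module on the GD-algebra $V$, and the formula only prescribes the bracket for $a,b\in V$; I would take axiom (I) (sesquilinearity) as the rule \emph{defining} the $\mathbb{C}[\partial]$-sesquilinear extension of the bracket to all of $R$, so that (I) holds by construction. Because the remaining axioms (II) and (III) are themselves sesquilinear in each slot, it suffices to check them on the generators $a,b,c\in V$. Both directions of the ``if and only if'' are then obtained simultaneously: each axiom, after expansion, becomes a polynomial identity in $\lambda,\mu$ whose coefficients (themselves operators in $\partial$ applied to elements of $V$) are expressions in $\circ$ and $[\,,\,]$, and the vanishing of all of them is exactly the list of GD relations.

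For the antisymmetry (II) I would compute both sides directly. Reading off from the formula that $a_{(0)}b=[a,b]+\partial(a\circ b)$, $a_{(1)}b=a\circ b+b\circ a$ and $a_{(n)}b=0$ for $n\ge 2$, the substitution $\lambda\mapsto-\lambda-\partial$ (with $\partial$ acting on the $a_{(n)}b$) gives
\[
-[a_{-\lambda-\partial}b]=-[a,b]+\partial(b\circ a)+\lambda(a\circ b+b\circ a),
\]
to be compared with $[b_\lambda a]=[b,a]+\partial(b\circ a)+\lambda(b\circ a+a\circ b)$. The $\partial$- and $\lambda$-coefficients agree identically (the coefficient of $\lambda$ is symmetric in $a,b$), and the two scalar terms coincide precisely when $[b,a]=-[a,b]$. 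Hence (II) is equivalent to relation $(3)$ and imposes no condition on $\circ$.

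The generalised Jacobi identity (III) is the substantial step and the main obstacle. I would expand the three brackets $[a_\lambda[b_\mu c]]$, $[b_\mu[a_\lambda c]]$ and $[[a_\lambda b]_{\lambda+\mu}c]$ by first using (I) to extract $\partial$ from each slot — producing the prefactor $-(\lambda+\mu)$ in the outer first slot on the right-hand side and the shift $\partial\mapsto\partial+\lambda$ (respectively $\partial+\mu$) in the inner second slots on the left — and then applying the defining formula to every bracket of generators that appears. The result is an identity of total degree two in $(\lambda,\mu)$, and I would equate the coefficients of the monomials $1,\lambda,\mu,\lambda^2,\lambda\mu,\mu^2$ and, within each, the powers of $\partial$. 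The expectation I would then verify is that the top coefficients $\lambda^2,\mu^2,\lambda\mu$ involve only $\circ$ and reduce to the left-symmetry $(1)$ together with the right-Novikov $(2)$ identities; that the coefficient free of $\lambda,\mu$ and of $\partial$ collapses to the Jacobi identity $(4)$ for $[\,,\,]$; and that the remaining mixed coefficients, after the bracket has been antisymmetrised using $(3)$, reduce exactly to the compatibility condition $(5)$. Reading these equivalences in both directions completes the proof.

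The only real difficulty I anticipate is bookkeeping: $\partial$ must be carried as an operator acting on the entire output, interacting with the spectral parameters solely through the sesquilinearity rules, so that the $\partial$-weighted summand $\partial(a\circ b)$ contributes to several monomial-coefficients at once; and one must identify the correct linear combinations of those coefficients that isolate each of $(1),(2),(4),(5)$, invoking $(3)$ precisely where the Jacobi coefficients need to be put into the antisymmetrised form of $(5)$. There is no conceptual obstruction once the coefficient comparison is organised; the content lies entirely in checking that it reproduces the five defining GD relations and nothing stronger.
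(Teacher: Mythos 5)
Your proposal follows essentially the same route as the paper's proof: antisymmetry is verified directly and shown to be equivalent to relation $(3)$, while the generalised Jacobi identity is expanded via sesquilinearity and treated as a polynomial identity in $\lambda,\mu,\partial$ whose coefficients must all vanish, this vanishing being equivalent to the remaining GD relations. The only divergence is bookkeeping --- the paper attributes the Novikov-type identities to the $\partial^2$ and mixed $\lambda\partial,\mu\partial,\lambda^2,\mu^2,\lambda\mu$ coefficients and the compatibility relation $(5)$ to the $\partial,\lambda,\mu$-linear ones, rather than exactly as you predict --- but since each individual coefficient yields a consequence of the GD axioms and the full system is equivalent to them, this does not change the argument.
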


\begin{proof}

 We need to check that the above $\lambda$-bracket satisfies axioms

 II. {\it (antysymmetry)}

$$[b_{\lambda} a]=-[a_{-\lambda-\partial} b]$$

and

III. {\it (generalised Jacobi identity)}

$$[a_{\lambda}[b_{\mu} c]]-[b_{\mu}[a_{\lambda} c]]=[[a_{\lambda} b]_{\lambda+\mu} c]$$

if and only if for all $a,b$ the multiplication is

right symmetric:

$$ ass(a,b,c)=ass(a,c,b)\, : \, ass(a,b,c)=(ab)c-a(bc)$$

left Novikov:

$$a(bc)=b(ac)$$

and left Gelfand-Dorfman:

$$[ac,b]-[bc,a]+b[c,a]-a[c,b]-[a,b]c=0.$$

To check I, we observe that

$$ \partial (ba)+ \lambda(ba+ab)=-\partial (ab)+(\lambda + \partial)(ab+ba).$$

To check II, each of three terms can be rewritten in the following way:

$$
[[a_{\lambda}, b]_{\l+\mu} c]=[[a,b]+\partial (ab)+\l(ab+ba)_{\l+\mu} \, c]=$$

$$[[a,b]_{\l+\mu} \, c]+[\partial (ab)_{\l+\mu} \, c]+ \l [(ab+ba)_{\l+\mu} \, c]=$$

$$-(\l+\mu)[ab_{\l+\mu} \, c]+ \l[(ab+ba)_{\l+\mu} \, c]=$$

$$-\mu[ab_{\l+\mu} \, c] + \l [ba_{\l+\mu} \, c] =$$

$$=\mu([a.b]c+\partial ((ab)c)+(\l+\mu)((ab)c+c(ab)))+$$

$$ \l ([b,a]c+\partial ((ba)c+(\l+\mu)((ba)c+c(ba))   ).$$

\vskip5mm

Analogously we rewrite $[a_{\lambda}[ b_{\mu} c]]$ and $[b_{\mu}[a_{\lambda} c]]$.

Then we can consider their sum as a polynomial  in $\partial , \mu, \l$, so it is zero iff  'coefficients' of
all monomials in the variables $\partial, \mu, \l$ are zero.

The fact that the coefficient of $\partial$ is equal to zero means that the left GD-identity is satisfied.

The coefficient of $\mu$ is:

$$-[ab,c]+[b,ca]+b[a,c]+[a,c]b+[a,b]c+c[a,b]-[a,bc+cb]$$

According to GD identity we can substitute

$$-[ab,c]= -[cb,a]+c[b,a]-a[b,c]-[a,c]b$$

into the previous expression, which after some cancelations gives us:

$$[bc,a]-[ac,b]=-a[c,b]+b[c,a]+[b,a]c,$$

which is again GD identity.

The same will happen with the coefficient of $\l$.

The coefficient of $\partial^2$ is equal to zero iff left Novikov identity is satisfied.

The  coefficients of $\mu^2, \l^2, \l\mu, \l\partial, \mu \partial$
are zero iff the right symmetric identity holds.

\end{proof}

This close connection between those two structures motivates us to look at the properties of the operad governing  conformal Lie algebras.

Let us define the operad ${\cal CL}ie$, which will actually be a family of operads ${\cal CL}ie_k$, parametrised by the positive integers.
The first axiom of the Lie conformal algebra says that big enough products are zero. Hence we define  operads ${\cal CL}ie_k, \,\, k\in \mathbb Z$, such that in the Lie conformal algebras over each of them  all operations $(n)$ for $n>k$ are zero.


\begin{definition}
The set of generating operations for the operad ${\cal CL}ie_k$ is formed by the operations
$\{a,b\}_{n,j}=\partial^{(j)}(a_{(n)}b)$, for $n,j=0,1,2,\cdots$.
The set of relations on this generators is the following:

$$(F3) \,\, \{a,b\}_{n,0} = -\sum\limits_{j=0}^{k} (-1)^{n+j} \{b,a\}_{n+j,j}$$

$$(F4) \,\, \{a,\{b,c\}_{n,0}\}_{m,0}- \{b,\{a,c\}_{m,0}\}_{n,0}=
\sum\limits_{j=0}^{m} \left({m \atop j}\right) \{\{a,b\}_{j,0},c\}_{n+m-j,0}.$$

\end{definition}

\vspace{8 mm}

\begin{theorem}\label{CLKosz}
The Conformal Lie operad ${\cal CL}ie$ is Koszul.
\end{theorem}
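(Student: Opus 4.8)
The plan is to apply the operadic Priddy criterion (Theorem \ref{Pr}): it suffices to exhibit, for each $k$, a presentation of ${\cal CL}ie_k$ whose relations admit a quadratic Gr\"obner basis in the free shuffle operad on the generators $\{a,b\}_{n,j}$. First I would observe that the two families of relations play very different roles. Relation $(F3)$ is \emph{linear} in the generators (one vertex on each side): it expresses $\{a,b\}_{n,0}$ as a combination of the swapped operations $\{b,a\}_{n+j,j}$, exactly as the antisymmetry relation $[a,b]=-[b,a]$ does for the ordinary Lie operad. Relation $(F4)$, by contrast, is genuinely \emph{quadratic}, a composition of two binary vertices. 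So the degree-two content to be controlled is carried by $(F4)$ alone, while $(F3)$ serves only to orient the generating set.

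Next I would fix an admissible ordering on tree-monomials --- the path-lexicographic ordering of Section \ref{Gr} --- with the generators ordered by the pair $(n,j)$ together with the planar arrangement of the two inputs, chosen so that $(F3)$ has as its leading term a generator in the ``wrong'' orientation and $(F4)$ has as its leading term one of the combed trees, say $\{a,\{b,c\}_{n,0}\}_{m,0}$. Using $(F3)$ one reduces away one orientation of every generator, after which the surviving relations are the $(F4)$'s. The assertion to be proved is then that $\{(F3),(F4)\}$ is already a Gr\"obner basis; since its nonlinear members are quadratic, Theorem \ref{Pr} will give Koszulity.

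The heart of the argument is the confluence check, i.e. showing that every ambiguity reduces to zero. There are two types. The overlaps between $(F3)$ and $(F4)$ occur wherever a vertex of $(F4)$ can be rewritten by antisymmetry; these $s$-polynomials should vanish precisely because the generalised Jacobi identity $(F4)$ is compatible with the skew-symmetry $(F3)$ --- the same compatibility that makes the conformal bracket well defined. The decisive overlaps are the cubic ones among three copies of $(F4)$: the four-input, three-vertex tree-monomials of the shapes $\{a,\{b,\{c,d\}\}\}$ and $\{\{a,b\},\{c,d\}\}$. Reducing such a monomial along its two available routes and subtracting, one must see the difference cancel using $(F4)$ again together with $(F3)$. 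This parallels the classical computation showing that the Jacobi identity alone is a quadratic Gr\"obner basis for the Lie operad, but here it is complicated by the divided-power index shifts $n\mapsto n+j$ and the binomial weights $\binom{m}{j}$; the cancellation should rest on Vandermonde-type identities among these coefficients.

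I expect this cubic ambiguity check to be the main obstacle, both because of the binomial bookkeeping and because the generating set is infinite (indexed by $(n,j)$), so one must also confirm that the chosen ordering satisfies the descending chain condition alluded to in the footnote of Section \ref{Gr}, guaranteeing that the reduction process terminates. Once all $s$-polynomials are shown to reduce to zero, the presentation has a quadratic Gr\"obner basis, so by Theorem \ref{Pr} each ${\cal CL}ie_k$ is Koszul, and hence ${\cal CL}ie$ is Koszul.
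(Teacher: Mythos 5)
Your overall strategy --- pass to the shuffle operad, choose an admissible ordering, and invoke the operadic Priddy criterion --- coincides with the paper's, but your argument stops exactly where the real work begins. You state that the cubic overlaps among copies of $(F4)$ ``should'' cancel via Vandermonde-type identities and that you ``expect this cubic ambiguity check to be the main obstacle''; that check is never carried out, so what you have is a plan rather than a proof. Moreover, with your choice of leading term $\{a,\{b,c\}_{n,0}\}_{m,0}$ such overlaps genuinely exist (for instance $\{a,\{b,\{c,d\}_{p,0}\}_{n,0}\}_{m,0}$ is divisible by the leading term in two ways), and there are infinitely many of them, indexed by the parameters $(n,j)$ and weighted by binomial coefficients, so their resolution cannot be taken on faith.

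The paper sidesteps this entirely by a cleverer choice of ordering, which is the actual content of its proof. After substituting $(F3)$ into $(F4)$ to obtain the shuffle relations $(G4)$ (so antisymmetry is absorbed into the choice of generators $\{a,b\}_{n,j}$ with $a<b$, rather than kept as a separate relation producing its own overlaps), one orders the labels $(n,j)$ lexicographically from the right with $0<2<3<\cdots<1$. The leading term of each relation is then the left comb $\{\{a,c\}_{m,0},b\}_{n+1,1}$, whose two vertices carry labels with \emph{different} second index: $0$ at the inner vertex and $1$ at the root. Two such combs can never overlap, since an overlap would force a single vertex to be labeled both $(m,0)$ and $(n'+1,1)$. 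The only point requiring proof is that $\{a,b\}_{n,0}=a_{(n)}b$ and $\{a,b\}_{m,1}=\partial(a_{(m)}b)$ are genuinely distinct generators; the paper establishes this by an explicit lemma constructing, for each $n$, a conformal Lie algebra $M_n$ (a free $\mathbb{C}[\partial]$-module on three generators) in which $a_{(n)}b\neq\partial(a_{(m)}b)$ for all $m$. With that, the set of leading terms is combinatorially free, the quadratic relations are vacuously a Gr\"obner basis, and Koszulity follows with no $s$-polynomial computation at all. To complete your own route you would have to actually resolve all the ambiguities you list (and verify termination for the infinite generating set), or else switch to an ordering of this kind that makes them disappear.
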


\begin{proof} To prove Koszulity we pass to the corresponding  shuffle operad ${\cal CL}ie_{sh}$
and construct a Gr\"obner bases there, according to Section \ref{Gr}. The important step will be to choose a suitable ordering of tree-monomials. Then we will apply the Priddy criterion of Koszulity (see Theorem\ref{Pr}).

The shuffle operad   is generated by elements $ \{a,b\}_{n,j}, \,\, a<b$.
The presentation of those operations by tree-monomials looks as follows:

\vspace{5mm}

 \begin{center}
\begin{picture}
(25,15)(5,5)
\drawline(15,0)(15,5)
\drawline(15,5)(20,10)
\drawline(15,5)(10,10)
\drawline(20,10)(25,15)
\drawline(10,10)(5,15)
\put(25,15){\makebox(2,2){2}}
\put(5,15){\makebox(2,2){1}}
\put(15,5){\circle*{1.7}}
\put(17,4){\makebox(7,3){$n,j$}}
\drawline(15,0)(15,-5)
\end{picture}
\end{center}

 \vspace{15 mm}

So the relations are obtained by substitution of $(F3)$ to $(F4)$,
whenever we need to change the order of inputs.

The operad  ${\cal CL}ie_{sh}$ is given by relations:

$$(G4) \,\, \{a,\{b,c\}_{n,0}\}_{m,0}- \sum_{j=0}^k (-1)^{n+j} \{\{a,c\}_{m,0},b\}_{n+j,j}=
\sum\limits_{j=0}^{m}  \left({ m \atop j} \right)
\{\{a,b\}_{j,0},c\}_{n+m-j,0}$$

for any $a<b<c, \,\, n,m = 0,1,2,\cdots$.

In the language of tree-monomials the defining relations look as follows:

\vspace{15 mm}

\setlength{\unitlength}{1.0mm}
\begin{picture}
(25,15)(5,5)
\drawline(10,10)(15,5)
\drawline(15,5)(20,10)
\drawline(15,5)(10,10)
\drawline(20,10)(25,15)
\drawline(10,10)(5,15)
\drawline(15,5)(15,-5)
\drawline(15,15)(20,10)
\put(25,15){\makebox(2,2){3}}
\put(5,15){\makebox(2,2){1}}
\put(15,15){\makebox(2,2){2}}
\put(20,10){\circle*{1.7}}
\put(15,5){\circle*{1.7}}
\put(20,5){\makebox(2,2){$m,0$}}
\put(25,10){\makebox(2,2){$n,0$}}
\end{picture}
$\displaystyle -(-1)^{n+j} \sum\limits_{j=0}^{k}\,\,$
\setlength{\unitlength}{1.0mm}
\begin{picture}
(25,15)(5,5)
\drawline(10,10)(15,5)
\drawline(15,5)(20,10)
\drawline(15,5)(10,10)
\drawline(20,10)(25,15)
\drawline(10,10)(5,15)
\drawline(15,5)(15,-5)
\drawline(10,10)(15,15)
\put(25,15){\makebox(2,2){2}}
\put(5,15){\makebox(2,2){1}}
\put(15,15){\makebox(2,2){3}}
\put(10,10){\circle*{1.7}}
\put(15,5){\circle*{1.7}}
\put(19,4){\makebox(7,3){n+j,j}}
\put(4,8){\makebox(2,2){$m,0$}}
\end{picture}
$\displaystyle -\sum\limits_{j=0}^m  \left( {m \atop j} \right)\,\,$
\begin{picture}
(25,15)(5,5)
\drawline(10,10)(15,5)
\drawline(15,5)(20,10)
\drawline(15,5)(10,10)
\drawline(20,10)(25,15)
\drawline(10,10)(5,15)
\drawline(15,5)(15,-5)
\drawline(10,10)(15,15)
\put(25,15){\makebox(2,2){3}}
\put(5,15){\makebox(7,2){1}}
\put(15,15){\makebox(2,2){2}}
\put(10,10){\circle*{1.7}}
\put(15,5){\circle*{1.7}}
\put(22,4){\makebox(10,3){$m+n-j,0$}}
\put(5,8){\makebox(2,2){$j,0$}}
\end{picture}

\vspace{15 mm}

We will choose now an appropriate admissible ordering on the tree-monomials, which will lead to a quadratic Gr\"obner basis.

Let us take a path-lexicographic ordering defined in section \ref{Gr}, but we should decide how to
order operations, marked in our case by pairs $m,j$.  We order those  pairs lexicographically from right to left, i.e. $\{(m,i)\}>\{(n,j)\}$, whenever $i>j$, or $i=j$ and $m>n$. We suppose symbols themselves ordered as follows:
$0<2<3<\cdots<1 --$ all as numbers, but $1$ is bigger than others. In particular, the operation $(m,0)$ is always smaller then $(n,j), \, j\neq 0$ and $(m,i)$ is smaller then $(n,1)$ if $i\neq 1$.

Under the above ordering, when comparing tree-monomials in the relations, we conclude that the highest tree monomial has the shape

\begin{center}
\setlength{\unitlength}{1.0mm}
\begin{picture}
(25,15)(5,5)
\drawline(10,10)(15,5)
\drawline(15,5)(20,10)
\drawline(15,5)(10,10)
\drawline(20,10)(25,15)
\drawline(10,10)(5,15)
\drawline(15,5)(15,-5)
\drawline(10,10)(15,15)
\put(25,15){\makebox(2,2){2}}
\put(5,15){\makebox(2,2){1}}
\put(15,15){\makebox(2,2){3}}
\put(10,10){\circle*{1.7}}
\put(15,5){\circle*{1.7}}
\put(20,4){\makebox(7,3){$n+1,1$}}
\put(4,8){\makebox(2,2){$m,0$}}
\end{picture}
\end{center}

\vspace{15 mm}

This follows from the inequalities:

$$\{(m,0),(m,0)(n,0),(m,0)(n,0), (123)\} < \{(n+j,j)(m,0),(n+j,j),(n+j,j)(m,0), (132)\}$$

$$  \{(n+m-j,0)(j,0),(n+m-j,0)(j,0),(m+n-j,0), (123)\}  $$

   $$< \{(n+j,j)(m,0),(n+j,j),(n+j,j)(m,0), (132)\}.$$

We should find out now which ambiguities this set of highest words (when $m$ and $n$ are ranging over 0,1,2,..) can form. For this we need to prove the following

\begin{lemma} For arbitrary $n,m = 0,1,2,\cdots$ the operations $a_{(n)}b $ and
$\partial (a_{(m)}b) $ do not coincide.
\end{lemma}

\begin{proof} For any fixed $n$ we construct a free $\mathbb C[\partial]$ module $M_n$, together with operations  $a_{(n)}b $ on it, which satisfy axioms (c1)-(c4), but such that $a_{(n)}b \neq
\partial (a_{(m)}b) $ for any $m=0,1,2,\cdots$.
This means for any $n$ we will have a concrete conformal Lie algebra $M_n$, where $a_{(n)}b \neq
\partial (a_{(m)}b) $ for any $m$.  The statement of the lemma will follow then.

As it has been pointed out for example in the Remark(2.2) in \cite{KA}, if we define operations satisfying (c3) and (c4) on the basis of a free $\mathbb C[\partial]$-module $R$, they can be extended uniquely to the whole $R$, according to (c2). So we construct our examples having this in mind. All our modules $M_n$ will be free modules over $\mathbb C[\partial]$, with the basis consisting of three elements, which we denote $a,b,c$. So all what we need is to define operations satisfying (c3) and (c4) on $a,b,c$, such that  $x_{(n)}y \neq
\partial (x_{(m)}y) $ for any $n,m$ and  for some elements $x$ and $y$ in $M_n$.

Let us first construct appropriate modules for $n=0,1$ for illustration, and then give a general construction.

In $M_0$, define $a_{(0)}b=c, b_{(0)}a=-c$ and the rest of the products of elements from the basis $a,b,c$ are equal to zero:

 $a_{(0)}c=0, c_{(0)}a=0$,

$b_{(0)}c=0, c_{(0)}b=0$, and

$z_{(i)}w=0$ for all $i>0$, and $z,w \in M_0$.

Surely (c3) and (c4)are satisfied in this case. We choose $a_{(0)}b$ and $ b_{(0)}a$ related in such  a way, that  (c3) is satisfied, and (c4) is satisfied since with such a multiplications each product of three elements should be zero.

Now ensure that the lemma condition is satisfied for $n=0, m=0,1,2,\cdots$. Indeed,

$$c=a_{(0)}b \neq \partial (a_{(m)}b)= \partial(0)=0$$

for any $m=1,2,\cdots$, and

$$c=a_{(0)}b \neq \partial (a_{(m)}b)= \partial c$$

for $m=0$.

These hold true since $c$ is an element in the basis of the free $\mathbb C[\partial]$ module, so $c\neq 0$ and $c\neq \partial c$.

Take now $n=1$ and construct the module $M_1$. Suppose

 $$a_{(1)}b=c, b_{(1)}a=c.$$

 Then  (c3), considered as a triangular system of equations, which express operations $a_{(n)}b$  via operations with bigger numbers, will give us the equation

  $$a_{(0)}b= b_{(0)}a-\partial (b_{(1)}a).$$

  The possible solution is $a_{(0)}b= 0$ and $ b_{(0)}a=\partial c.$
The rest of operations on the pairs from $a,b,c$ we again put to be zero. The axiom (c4) is again automatically satisfied.

The  conditions of the lemma hold for $n=1$ and any $m$:

$$c=a_{(1)}b \neq \partial (a_{(m)}b)$$

since $\partial (a_{(m)}b)=0$ for $m=0$; $\partial (a_{(m)}b)=\partial c$  for $m=1$ and $\partial (a_{(m)}b)=0$ for  $m=2,3,\cdots$.

Describe now the general construction of $M_n$. Let $M_n$ be a free $\mathbb C[\partial]$ module with the basis $\{a,b,c\}$, endowed with the following operations:

$$a_{(i)}b=0, \,\,\, \text{ for any} \,\,\, i>n $$

$$a_{(n)}b =c, \,\,\,  b_{(n)}a=(-1)^{n+1}c$$

$$a_{(n-1)}b =0, \,\,\,  b_{(n-1)}a=(-1)^{n+1}\partial c$$

$$a_{(n-2)}b =0, \,\,\,  b_{(n-2)}a$$

$$\cdots$$

$$a_{(0)}b =0, \,\,\,  b_{(0)}a$$

All products of other pairs of elements of the basis are equal to zero.

We claim that

1). $a_{(n)}b \neq  \partial (a_{(m)}b)$ for any $m$. Indeed, $a_{(n)}b =c$ is not equal to one of the following:

$\partial (a_{(0)}b)=\partial (0)=0$

$\partial (a_{(1)}b)=\partial (0)=0$

$\cdots$

$\partial (a_{(n-2)}b)=\partial (0)=0$

$\partial (a_{(n-1)}b)=\partial (0)=0$

$\partial (a_{(n)}b)=\partial c$

The reason why $c \neq \partial c$ and $c \neq 0$ is again that c is an element of the basis of the free
$\mathbb C[\partial]$ module.

2). Axioms

 $$(c3) \,\,\, a_{(r)} b = -(-1)^{r+j} \sum\limits_{j=0}^{\infty} \partial^{(j)}( b_{(r+j)} a) $$

and (c4) are satisfied in $ M_n$  for arbitrary $r$.

Indeed, for $0\leq r \leq n$ the equality

$$ a_{(r)} b = -(-1)^{r+j} \sum\limits_{j=0}^{k} \partial^{(j)} (b_{(r+j)} a )=$$

 $$-((-1)^{n-1} \partial^{(n-r-1)}(b_{(n-1)}a) + (-1)^{n} \partial^{(n-r)}(b_{(n)}a)) = 0$$

 holds, when $ b_{(n-1)}a=(-1)^{n+1}\partial c$ and $b_{(n)}a=(-1)^{n+1}c$.

For $ r > n$ all terms of the axiom (c3) are zero, so it is automatically satisfied.

The axiom (c4) also holds true for such a multiplications since the product $(n)$ of any two elements of the basis is either zero or $c$. If it is zero, the product $(m)$ with the third element is also zero. If it is $c$, due to our definitions of the products, $c$, multiplied by any other element of the basis,  again give zero. So any combination of products of three elements from the basis is zero.

\end{proof}

{\bf Remark.} We can give another proof of this lemma, which rely on the construction of current Lie algebras. Namely, take the current Lie algebra ${\cal C}={\cal G} \otimes {\cal A}$, where ${\cal G}$
 is non-abelian Lie algebra. Define on ${\cal C}$ a Lie conformal structure in the following way:
 $ a_{(0)}b=[a,b]$, $a_{(n)}b=0, \, n=1,2,\cdots. a,b \in {\cal G} $. Then, since according to axiom (c2)
 $(\partial^{(n)} a)_{(m)}b=(-1)^n a_{(m-n)}b$, we can find two elements $A=\partial^{(n)} a, \,\, B=b$, for which
$A_{(n)}B \neq \partial A_{(m)}B$ for any $n,m$. Indeed, $(\partial^{(n)} a)_{(n)}b=(-1)^n a_{(0)}b$,
$\,\,\partial((\partial^{(n)} a)_{(m)}b)=(-1)^n \partial( a_{(m-n)}b)$, which is equal to $(-1)^n \partial (a_{(0)}b)$, if $m=n$and zero otherwise.

\vspace{5mm}

Since we have ensured, that the operations $a_{(n)}b $ and $\partial (a_{(m)}b) $  do not coincide for arbitrary $m$ and $n$, the highest words of the Jacobi relations  can not give rise to any ambiguity. The highest words of those relations w.r.t our ordering are of the shape
$\{\{a,c\}_{m,0},b\}_{n+1,1}$, depicted   above by the left comb labeled tree.
This means this presentation by generators and relations of the operad ${\cal CL}ie$,  with respect to the chosen ordering of tree-monomials, is formed by a combinatorially free set of quadratic (degree two) tree monomials.  As a consequence, quadratic relations form a Gr\"obner basis.

From this we can conclude that the operad is Koszul, due to the analogues of the Priddy criterion for operads (see Theorem \ref{Pr} in section \ref{Gr}).
\end{proof}

\section{Acknowledgements}

We are grateful to Victor Kac for his suggestion to look at the question of Koszulity of an operad associated to the axioms of Conformal Field Theory, and for many valuable discussions. We also would like to thank V. Dotsenko for a number of useful remarks. The work is supported by the Grant 9038 of the Estonian Scientific Council.

\normalsize

\vskip1truecm

\scshape

\noindent  Natalia Iyudu \\





\noindent Queens's University Belfast

\noindent Department of Pure Mathematics

\noindent University road, Belfast, BT7 1NN, UK

\noindent E-mail address: \qquad {\tt n.iyudu@qub.ac.uk}





\vskip 5mm

\noindent Abdenacer Makhlouf \\

\noindent Universit\'{e} de Haute Alsace,

\noindent Laboratoire de Math\'{e}matiques, Informatique et Applications,

\noindent 4, rue des Fr\`{e}res Lumi\`{e}re F-68093 Mulhouse, France

\noindent E-mail address: \qquad {\tt Abdenacer.Makhlouf@uha.fr}

\end{document}